\def\FF{{\mathbb{F}}}
\def\m{{\mathfrak{m}}}
\def\PP{{\mathbb{P}}}
\def\R{{\mathbb{R}}}
\def\ZZ{{\mathbb{Z}}}
\def\Der{{\mathrm{Der}}}
\def\Frac{{\mathrm{Frac}}}
\def\Hom{{\mathrm{Hom}}}
\def\Proj{{\mathrm{Proj\; }}}
\def\Spec{{\mathrm{Spec\; }}}
\def\der{\partial}
\theoremstyle{plain}
\newtheorem{thm}{Theorem}[section]
\newtheorem*{theorem}{Theorem}
\newtheorem{lem}[thm]{Lemma}
\theoremstyle{definition}
\newtheorem{dfn}[thm]{Definition}
\theoremstyle{remark}
\newtheorem*{prob}{Problem}
\title{Globally F-regular $F$-sandwiches of degree $p$ of a projective space}
\author{Tadakazu Sawada}
\address{Department of General education, Fukushima National College of Technology, 
30 Aza-Nagao, Kamiarakawa, Iwaki-shi, Fukushima 970-8034, Japan}
\email{sawada@fukushima-nct.ac.jp}
\begin{document}

\begin{abstract}
We prove that globally F-regular $F$-sandwiches of degree $p$ of a projective space 
are toric varieties. 
\end{abstract}

\maketitle
\markboth{Tadakazu Sawada}{Globally F-regular $F$-sandwiches of $\PP^n$ of degree $p$}
\section*{Introduction}
We work over an algebraically closed field $k$ of positive characteristic $p$. Let $X$ 
be a variety over $k$. If an iterated Frobenius morphism $F^e:X\rightarrow X$ factors 
as $X\rightarrow Y\rightarrow X$, we say $Y$ is a Frobenius sandwich of $X$. If $e=1$, 
Frobenius sandwich $Y$ is called an $F$-sandwich. Given a variety, it is natural to ask 
what kinds of varieties appear as Frobenius sandwiches. From the view point of Frobenius 
splitting, we have considered the following problem in \cite{HS}: 
\begin{prob}
Given a globally F-regular variety $X$, classify globally F-regular Frobenius sandwiches of $X$.
\end{prob}
In \cite{HS} and \cite{S}, we have classified globally F-regular $F$-sandwiches of the projective 
plane and Hirzebruch surfaces. $F$-sandwiches are constructed by glueing quotients of affine 
patches by a rational vector field. The classifications have been achieved by explicit calculations 
of coordinate changes. In this paper, we consider globally F-regular $F$-sandwiches of a projective 
space $\PP^n$. The following is the main result of this paper. 
\begin{theorem}
Globally F-regular $F$-sandwiches of degree $p$ of $\PP^n$ are toric varieties. 
\end{theorem}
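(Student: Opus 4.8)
The plan is to translate the question into the theory of $1$-foliations, to bound the numerical invariant of the foliation by means of global F-regularity, and then to reduce --- by explicit changes of coordinates, in the spirit of \cite{HS} and \cite{S} --- to a short list of monomial vector fields whose quotients are manifestly toric. Accordingly, I would first fix the dictionary: a degree-$p$ $F$-sandwich is the same as a purely inseparable morphism $\pi\colon\PP^n\to Y=\PP^n/\F$ of degree $p$ with $\F\subseteq T_{\PP^n}$ a saturated, $p$-closed subsheaf of rank one; since $\mathrm{Pic}(\PP^n)=\ZZ\,\mathcal O(1)$ we may write $\F\cong\mathcal O(m)$, and the inclusion is a nonzero section $\delta$ of $T_{\PP^n}(-m)$, unique up to scalar. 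From the Euler sequence $H^0(T_{\PP^n}(-m))\ne 0$ exactly when $m\le 1$, so $Y$ is encoded by a single $p$-closed twisted vector field with $m\le 1$. Two remarks will be used throughout: $Y$ is automatically normal, being a foliation quotient of a smooth variety, so it suffices to exhibit an $n$-dimensional torus acting on $Y$ with a dense orbit; and the canonical bundle formula for foliation quotients, $K_{\PP^n}=\pi^{*}K_Y+(1-p)K_{\F}$ with $K_{\F}=-mH$, gives $\pi^{*}(-K_Y)=\bigl((n+1)+(p-1)m\bigr)H$.

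Next I would bound $m$. Since a globally F-regular projective variety is of Fano type, $-K_Y$ is big, and pulling back along the finite map $\pi$ forces $(n+1)+(p-1)m>0$; thus $m$ ranges over a finite set, namely $\{1,0,-1,\dots,1-\lceil(n+1)/(p-1)\rceil\}$, and $m\in\{0,1\}$ as soon as $p\ge n+2$. Then comes the case analysis. When $m=1$, $H^0(T_{\PP^n}(-1))\cong k^{n+1}$ is a single nonzero $\mathrm{PGL}_{n+1}$-orbit, so $\F$ is unique and $Y\cong\PP(1,\dots,1,p)$, which is toric. When $m=0$, $\delta=\delta_A$ is an honest linear vector field, $A\in\mathfrak{pgl}_{n+1}$, and $p$-closedness says $\delta_A^{[p]}=\delta_{A^{p}}\in k\,\delta_A$, i.e.\ $A^{p}=cA+dI$ in $\mathfrak{gl}_{n+1}$; by Jordan--Chevalley, $A$ is then, modulo scalars, either semisimple with all eigenvalues in one $\FF_p$-coset of $k$ --- in which case, after rescaling $\delta$ and diagonalizing, $\delta=\sum a_ix_i\der_{x_i}$ with $a_i\in\FF_p$, so $Y=\PP^n/\mu_p$ for a diagonal action of $\mu_p$, a toric variety (automatically globally F-regular, $\mu_p$ being linearly reductive) --- or a scalar plus a nilpotent $N$ with $N^{p}=0$.

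The remaining cases --- nilpotent $\delta$ when $m=0$, and all foliations with $m\le -1$ --- are where global F-regularity does the real work. Here I would pass from ``$Y$ globally F-regular'' to an effective bound on the discrepancy data of $\F$ (concretely, a bound on the Jordan type of $N$, respectively on the degeneracy scheme $Z(\delta)\subset\PP^n$) by means of the inseparable form of adjunction for $\pi$, and thereby eliminate all but finitely many types; in each surviving type I would produce affine coordinates $u_1,\dots,u_n$ on the chart $U_0\cong\AA^n$ in which $\delta=\sum_i a_iu_i\der_{u_i}+\sum_j\der_{u_j}$ with $a_i\in\FF_p$. Such a $\delta$ generates a height-one infinitesimal subgroup scheme of $\mathbb G_m^{\,r}\times\mathbb G_a^{\,s}$ acting monomially on $\AA^n$, so each chart of $Y$ is an affine toric variety and the transition maps are monomial; equivalently the torus of $\PP^n$, suitably modified along the leaf directions, descends to a dense torus action on $Y$, and $Y$ is toric.

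The step I expect to be the main obstacle is this last one for $m\le 0$: controlling the restricted power $\delta^{[p]}$ of a nilpotent-type vector field (Jacobson's formula makes this delicate once the ambient restricted Lie algebra is not $2$-step nilpotent), isolating the correct inseparable adjunction so that global F-regularity becomes a quantitative constraint on $\delta$, and then checking that each of the finitely many surviving normal forms really yields a globally F-regular toric $Y$ --- together with the bookkeeping needed to glue the chartwise toric descriptions into a single global one. By contrast, the reduction to finitely many cases, the case $m=1$, and the semisimple $m=0$ foliations should be routine.
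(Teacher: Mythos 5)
Your framework (rank-one saturated $p$-closed $\F\cong\mathcal O_{\PP^n}(m)\subset T_{\PP^n}$, then constrain $m$ and reduce to normal forms) is a reasonable dictionary, and your treatment of $m=1$ and of the semisimple linear fields at $m=0$ does essentially match what the paper does in Lemmas~\ref{key} and~\ref{toric} (diagonalize a non-nilpotent $p$-closed linear derivation to $\sum a_iX_iD_i$ with $a_i\in\FF_p$, then check the quotient is toric by a lattice computation, via Lemma~\ref{loc-glob}). But the proposal has a genuine gap exactly where you flag ``the main obstacle'': the cases $m\le -1$ and the nilpotent type at $m=0$ are not proved, only announced. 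Your sole quantitative use of global F-regularity is bigness of $-K_Y$, which via $\pi^*(-K_Y)=((n+1)+(p-1)m)H$ only gives $m>-(n+1)/(p-1)$; when $p\le n+1$ this leaves genuinely negative values of $m$ on the table, and for those your plan (``inseparable adjunction'', ``bound on discrepancy data'', ``eliminate all but finitely many types'', chartwise normal forms $\sum a_iu_i\der_{u_i}+\sum_j\der_{u_j}$ with monomial transitions) is not an argument: nothing in it shows such foliations cannot have globally F-regular quotients, nor that surviving ones would be toric, and the gluing of chartwise toric structures is also left open. The same goes for the additive/nilpotent $m=0$ fields (e.g.\ a regular nilpotent linear field): Jordan--Chevalley gets you to ``scalar plus nilpotent,'' but you never rule these out or identify their quotients.

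The missing idea, which is the actual engine of the paper's proof, is to convert global F-regularity of $Y$ into \emph{F-splitting of $\pi$} and then into a \emph{section of a power of the foliation}: by Lemma~\ref{split_vs_F-reg} (Schwede--Smith), $Y$ globally F-regular is equivalent to the splitting of $\mathcal O_Y\to\pi_*\mathcal O_{\PP^n}$, hence $\Hom_{\mathcal O_Y}(\pi_*\mathcal O_{\PP^n},\mathcal O_Y)\ne 0$; by Lemma~\ref{cong} (from \cite{HS}, Theorem~3.4) this Hom is $H^0(\PP^n,L^{\otimes(p-1)})$, so $L=\mathcal O(m)$ has $m\ge 0$ and $\delta$ can be normalized to a \emph{global} vector field, i.e.\ an element of $\bigoplus_i SD_i/(D_E)$ with linear coefficients. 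This single step annihilates all your $m\le -1$ cases at once — no case analysis, no adjunction bound — and reduces the problem to the linear-algebra statement (Lemma~\ref{key}, after $p$-closedness is transferred by Lemma~\ref{p-closed}) plus the explicit toric computation (Lemma~\ref{toric}). In short: bigness of $-K_Y$ is too coarse an invariant; you need the splitting section itself, viewed through the isomorphism $\Hom_{\mathcal O_Y}(\pi_*\mathcal O_X,\mathcal O_Y)\cong H^0(X,L^{\otimes(p-1)})$, to control the foliation. Until the negative-$m$ and nilpotent cases are actually closed (by this or some other concrete mechanism), the proposal does not prove the theorem.
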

For the proof, we give a description of $F$-sandwiches as $\mathrm{Proj}$ of the constant ring 
of the homogeneous coordinate ring of $\PP^n$ by a global section of $T_{\PP^n}$. Using that 
description, we show that globally F-regular $F$-sandwiches of degree $p$ of $\PP^n$ are toric 
varieties without tedious calculations of coordinate changes. 

In Section 1, we review generalities on Frobenius sandwiches and globally F-regular varieties. In 
Section 2, we give a description of global sections of tangent bundle $T_{\PP^n}$ as a derivation 
over the homogeneous coordinate ring of $\PP^n$. In Section 3, we give the proof of the main 
theorem. 

\section{Frobenius sandwiches and Globally F-regular varieties}
In what follows, we do not distinguish the absolute Frobenius morphism and the relative one, 
since we work over the algebraically closed field. See \cite{H} for the definitions of those 
Frobenius morphisms. 

First we review generalities on Frobenius sandwiches. 

\begin{dfn}[\cite{HS}, \cite{E}]
Let $X$ be a smooth variety over $k$. A normal variety $Y$ is an {\it $F^e$-sandwich} of $X$ 
if the $e$-th iterated relative Frobenius morphism of $X$ factors as 
$$\xymatrix
{X \ar[rr]^{F^e_{\rm rel}} \ar[rd]_{\pi}  &                  & X^{(-e)} \\
                                          & Y \ar[ru]_{\rho} &
}$$
for some finite $k$-morphisms $\pi : X \rightarrow Y$ and $\rho : Y \rightarrow X^{(-e)}$, which are 
homeomorphisms in the Zariski topology. The Frobenius sandwich $Y$ is of {\it degree $p$} if the 
degree of the morphism $\pi : X \rightarrow Y$ is $p$. 

A {\it $1$-foliation} of $X$ is a saturated $p$-closed subsheaf $L$ of the tangent bundle $T_X$ 
closed under Lie brackets, where $L$ is said to be $p$-closed if it is closed under $p$-times iterated 
composite of differential operators. 
\end{dfn}

Let $X$ be a smooth variety over $k$, and $K(X)$ be the function field of $X$. A rational vector field 
$\delta \in \Der_k K(X)$ is {\it $p$-closed} if $\delta^p=\alpha \delta$ for some $\alpha \in K(X)$. 
Then there are one-to-one correspondences among the followings: 
\begin{itemize}
\item $F$-sandwiches of degree $p$ of $X$; 
\item invertible $1$-foliations of $X$; 
\item $p$-closed rational vector fields of $X$ modulo an equivalence $\sim$.
\end{itemize}
\noindent(1) Rational vector fields and $F$-sandwiches: Let $\delta,\delta' \in \Der_k\, K(X)$. We 
denote $\delta \sim {\delta}'$ if there exists a non-zero rational function $\alpha \in K(X)$ such that 
$\delta = \alpha {\delta}'$. We can easily check that $\sim$ is an equivalence relation between rational 
vector fields. Let $\{U_i = \Spec\, R_i\}_i$ be an affine open covering of $X$. Given a $p$-closed rational 
vector field $\delta \in \Der_k\, K(X)$, we have a quotient variety $X/{\delta}$ defined by glueing 
$\Spec\, R_i^{\delta}$, where $R_i^{\delta}=\{r\in R_i\,|\,\delta(r)=0\}$, and a quotient map 
$\pi_{\delta} : X \rightarrow X/{\delta}$ induced from the inclusions $R_i^{\delta} \subset R_i$. Then 
$X/\delta$ is an $F$-sandwich of degree $p$ with the finite morphism $\pi_{\delta} : X \rightarrow X/{\delta}$. 

\noindent(2) Rational vector fields and $1$-foliations: A rational vector field $\delta \in \Der_k\, K(X)$ 
is locally expressed as $\alpha \sum f_i \der/\der s_i$, where $s_i$ are local coordinates, $f_i$ are regular 
functions without common factors, and $\alpha \in K(X)$. The divisor $\mathrm{div}(\delta)$ associated to 
$\delta$ is defined by glueing the divisors $\mathrm{div}(\alpha)$ on affine open sets. We see that the 
multiplication map $\cdot\,\delta :\mathcal{O}_X(\mathrm{div}(\delta))\rightarrow T_X$ defined by 
$h/\alpha \mapsto h\sum f_i \der/\der s_i$ induces an inclusion $\mathcal{O}_X(\mathrm{div}(\delta))\subset T_X$, 
and $\mathcal{O}_X(\mathrm{div}(\delta))$ is an invertible $1$-foliation of $X$. See \cite{RS}, \cite{E}, \cite{Hirokado}, 
\cite{S} for more details of the correspondences. 

Next we recall the definition of global F-regularity. 
\begin{dfn}[\cite{Smith}]
A projective variety over an F-finite field is {\it globally F-regular} if it admits some section ring that is F-regular. 
\end{dfn}
For example, projective toric varieties are globally F-regular. In particular, projective spaces are globally 
F-regular. See \cite{Smith}, \cite{SS} for more examples and the general theory of globally F-regular 
varieties. In our situation, global F-regularity has a closed connection with splitting of Frobenius sandwiches. 

\begin{lem}[\cite{SS}, \cite{S} Lemma 1.2]\label{split_vs_F-reg}
Let $X$ be a globally F-regular variety over $k$ and $Y$ be an $F^e$-sandwich of $X$ with the finite morphism 
$\pi : X \rightarrow Y$ through which the Frobenius morphism of $X$ factors. Then $Y$ is globally F-regular if and 
only if the associated ring homomorphism $\mathcal{O}_Y \rightarrow \pi_{\ast} \mathcal{O}_X$ splits as an 
$\mathcal{O}_Y$-module homomorphism. 
\end{lem}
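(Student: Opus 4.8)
The plan is to translate the sheaf-theoretic splitting into a statement about associated section rings, where global F-regularity becomes strong F-regularity, and then to invoke two standard facts from tight closure theory: a module direct summand of a strongly F-regular ring is again strongly F-regular, and a strongly F-regular ring is a splinter, i.e.\ every module-finite extension ring splits off as a module over the base (see \cite{SS} and the references there). Throughout, $\pi_*\mathcal{O}_X$ is a coherent sheaf of $\mathcal{O}_Y$-algebras since $\pi$ is finite, and the map $\mathcal{O}_Y\to\pi_*\mathcal{O}_X$ in the statement is the canonical inclusion.

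To pass to section rings, observe that $X$ and $Y$ are both normal projective varieties: $X$ is globally F-regular, hence normal, and $Y$ is normal by the definition of an $F^e$-sandwich and projective because it is finite over $X^{(-e)}$. Fix an ample line bundle $M$ on $Y$; since $\pi$ is finite, $L:=\pi^*M$ is ample on $X$. By the projection formula,
$$
R:=\bigoplus_{m\ge 0}H^0(Y,M^{\otimes m}),\qquad
S:=\bigoplus_{m\ge 0}H^0(X,L^{\otimes m})=\bigoplus_{m\ge 0}H^0\big(Y,\,M^{\otimes m}\otimes\pi_*\mathcal{O}_X\big),
$$
so that $S$ is a module-finite graded $R$-algebra and $R\subseteq S$ is a module-finite inclusion of normal graded domains, induced by $\mathcal{O}_Y\hookrightarrow\pi_*\mathcal{O}_X$. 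I would then establish the dictionary: $\mathcal{O}_Y\hookrightarrow\pi_*\mathcal{O}_X$ splits as $\mathcal{O}_Y$-modules if and only if $R\subseteq S$ splits as $R$-modules. In the forward direction, tensor an $\mathcal{O}_Y$-linear retraction $\pi_*\mathcal{O}_X\to\mathcal{O}_Y$ with each $M^{\otimes m}$, take global sections, and assemble the pieces; the resulting graded map $S\to R$ is $R$-linear precisely because the original retraction is $\mathcal{O}_Y$-linear, and it restricts to the identity on $R$. In the converse direction, start from an $R$-linear retraction $S\to R$, replace it by its degree-zero graded component (still a retraction, since $R\subseteq S$ is homogeneous of degree zero), restrict to a Veronese subring $R^{(d)}\subseteq S^{(d)}$ with $M^{\otimes d}$ very ample, and sheafify over $Y\cong\mathrm{Proj}\,R^{(d)}$; exactness of $\widetilde{(-)}$ turns the split inclusion of finitely generated graded modules into the desired split inclusion of sheaves.

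With the dictionary in place, I would conclude by using that a projective variety over $k$ is globally F-regular if and only if its section ring with respect to some --- equivalently, any --- ample line bundle is strongly F-regular (\cite{Smith},\,\cite{SS}; here one uses that weak and strong F-regularity agree for $\mathbb{N}$-graded rings). Hence ``$X$ globally F-regular'' is equivalent to ``$S$ strongly F-regular'', and ``$Y$ globally F-regular'' to ``$R$ strongly F-regular''. If $\mathcal{O}_Y\to\pi_*\mathcal{O}_X$ splits, then $R$ is an $R$-module direct summand of the strongly F-regular ring $S$, hence strongly F-regular, so $Y$ is globally F-regular. Conversely, if $Y$ is globally F-regular then $R$ is strongly F-regular, hence a splinter, so the module-finite extension $R\subseteq S$ splits as $R$-modules, and therefore $\mathcal{O}_Y\hookrightarrow\pi_*\mathcal{O}_X$ splits.

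The main obstacle I anticipate is not a single deep theorem but the careful bookkeeping in the cone--sheaf dictionary: that $R$-linearity survives tensoring with $M^{\otimes m}$ and taking global sections, that an a priori ungraded retraction can be made graded by passing to its degree-zero component, and that sheafification over $\mathrm{Proj}\,R^{(d)}$ recovers precisely the inclusion $\mathcal{O}_Y\hookrightarrow\pi_*\mathcal{O}_X$. I also note that the argument uses neither the smoothness of $X$ nor the factorization $F^e=\rho\circ\pi$: it shows more generally that for any finite surjective morphism $\pi:X\to Y$ of normal projective varieties with $X$ globally F-regular, $Y$ is globally F-regular if and only if $\mathcal{O}_Y\to\pi_*\mathcal{O}_X$ splits as $\mathcal{O}_Y$-modules.
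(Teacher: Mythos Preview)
The paper does not supply its own proof of this lemma: it is stated with attribution to \cite{SS} and \cite{S}, Lemma~1.2, and left unproved. There is therefore no in-paper argument to compare against.

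Your argument is correct and is the standard one. Passing to section rings via an ample $M$ on $Y$ and $L=\pi^*M$ on $X$, together with the projection formula, identifies the splitting of $\mathcal{O}_Y\hookrightarrow\pi_*\mathcal{O}_X$ with the splitting of the module-finite graded extension $R\subseteq S$; the two tight-closure facts you invoke (a direct summand of a strongly F-regular ring is strongly F-regular, and a strongly F-regular ring is a splinter) then handle both implications. The bookkeeping you flag is routine: replacing an $R$-linear retraction by its degree-zero graded component still yields a retraction because $R\subseteq S$ is a degree-preserving inclusion, and since the functor $\widetilde{(-)}$ on finitely generated graded $R$-modules is exact and sends $S=\bigoplus_m H^0(Y,M^{\otimes m}\otimes\pi_*\mathcal{O}_X)$ to $\pi_*\mathcal{O}_X$, a graded splitting of $R\subseteq S$ sheafifies to a splitting of $\mathcal{O}_Y\hookrightarrow\pi_*\mathcal{O}_X$. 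Your final remark is also on point: nothing in the proof uses smoothness of $X$ or the Frobenius factorization, so the statement holds for any finite surjection of normal projective varieties with globally F-regular source, which is the level of generality at which this circle of ideas is treated in \cite{SS}.
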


\section{Global sections of $T_{\PP^n}$}
Let $R=k[X_0,\ldots ,X_n]$, $X=\PP^n=\Proj R$, $S=H^0(X,\mathcal{O}_X(1))=k\langle X_0,\ldots ,X_n\rangle$, 
$D_i={\partial}/{\partial X_i}$, and $D_E=\sum_{i=0}^n X_i D_i \in \Der_k R$. There exists an isomorphism $f$ between 
$k$-modules $H^0(X,T_X)$ and $\bigoplus_{i=0}^{n} S D_i/\left(D_E\right)$ defined by the composition 
$$H^0(X,T_X) \rightarrow \displaystyle\bigoplus_{i=0}^n S e_i/\left(\sum_{i=0}^n X_i e_i\right)\rightarrow 
\bigoplus_{i=0}^{n} S D_i/\left(D_E\right),$$
where the first map is induced by the Euler sequence and the second one is defined by $e_i\mapsto D_i$. 
Let $x_j=X_j/X_i$, $R_i=R_{(X_i)}=k[x_0, \ldots ,x_n]$, $U_i=D_+(X_i)=\Spec R_i$, and 
$d_j={\partial}/{\partial x_j}\in \Der_k R_i$. Then 
$$X_sD_i({X_t}/{X_i})=
\left\{\begin{array}{ll}
-{X_sX_t}/{X_i^2}=-x_sx_t & (s\not=i)\\[1mm]
-{X_t}/{X_i}=-x_t & (s=i)
\end{array}\right.$$
and 
$$X_sD_j({X_t}/{X_i})=
\left\{\begin{array}{ll}
0 & (t\not=j)\\[1mm]
{X_s}/{X_i}=x_s & (t=j)
\end{array}\right.$$
for $j\not=i$. 
Hence  
$$X_sD_i=
\left\{\begin{array}{ll}
-x_s\displaystyle\sum_{t\not=i} x_td_t & (s\not=i)\\[1mm]
-\displaystyle\sum_{t\not=i} x_td_t& (s=i)
\end{array}\right.$$
and $X_sD_j=x_sd_j$ for $j\not=i$ on an open set $U\subset U_i$. 
We define the {\it restriction map} $\varphi_U:\bigoplus_{i=0}^{n} S D_i/\left(D_E\right) \rightarrow H^0(U,T_X)$ 
by $\overline{X_sD_i}\mapsto -x_s\sum_{t\not=i} x_td_t\ (s\not=i)$, $\overline{X_iD_i}\mapsto -\sum_{t\not=i} x_td_t$, and 
$\overline{X_sD_j}\mapsto x_sd_j\ (j\not= i)$ for the open set $U\subset U_i$. 
Then we have a commutative diagram 
$$
\xymatrix{
H^0(X,T_X) \ar[d]_{\rho_{XU}} 
& \displaystyle\bigoplus_{i=0}^{n} S e_i/\left(\sum X_i e_i\right) \ar[l] \ar[r] \ar[d] 
& \displaystyle\bigoplus_{i=0}^{n} SD_i/\left( D_E\right) \ar@/^25mm/[lld]_{\varphi_U}\\
H^0(U,T_X) & H^0(U, \mathrm{Coker} (\mathcal{O}_X\!\rightarrow \!\mathcal{O}_X(1)^{n+1})) \ar[l] & 
}$$
where $\rho_{XU}$ is the restriction map of $T_X$. 
Therefore we have the following Lemma: 

\begin{lem}\label{comm}
For any open set $U\subset U_i$, 
we have a commutative diagram
$$
\xymatrix{
H^0(X,T_X) \ar[r]^(.45){f} \ar[d]_{\rho_{XU}} & \displaystyle\bigoplus_{i=0}^{n} SD_i/\left(D_E\right) \ar[ld]^{\varphi_U}\\
H^0(U,T_X) & 
}$$
\end{lem}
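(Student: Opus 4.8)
The assertion is exactly the commutativity of the outer triangle of the diagram displayed immediately before the lemma. Indeed, by construction $f$ is that diagram's top row, read as the map $H^0(X,T_X)\to\bigoplus_i SD_i/(D_E)$ that is the inverse of the Euler identification followed by $e_i\mapsto D_i$, while $\rho_{XU}$ is its left-hand column. So it suffices to check that the left-hand square and the curved triangle of that diagram commute; then $\varphi_U\circ f=\rho_{XU}$ follows by composition.

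For the left-hand square, the key point is that the isomorphism $T_X\cong\Coker(\mathcal{O}_X\to\mathcal{O}_X(1)^{n+1})$ furnished by the Euler sequence $0\to\mathcal{O}_X\to\mathcal{O}_X(1)^{n+1}\to T_X\to 0$ is an isomorphism of sheaves on the whole of $X$, hence commutes with restriction to the open subset $U$. Taking global sections, and using $H^0(X,\mathcal{O}_X)=k$ and $H^1(X,\mathcal{O}_X)=0$ to identify $\bigoplus_i Se_i/(\sum_i X_ie_i)$ with $H^0\bigl(X,\Coker(\mathcal{O}_X\to\mathcal{O}_X(1)^{n+1})\bigr)$, the square whose horizontal arrows are these identifications and whose vertical arrows are the restriction maps of $T_X$ and of the cokernel sheaf commutes by naturality of restriction of sheaves.

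The content of the curved triangle is the claim that chasing a generator $\overline{X_se_i}$ down the middle column — that is, restricting to $U\subset U_i$ and then using the identification $H^0\bigl(U,\Coker(\mathcal{O}_X\to\mathcal{O}_X(1)^{n+1})\bigr)=H^0(U,T_X)$ — yields the section prescribed by the definition of $\varphi_U$. Since the Euler map $\mathcal{O}_X(1)^{n+1}\to T_X$ is $\mathcal{O}_X$-linear and carries $e_i$ to the vector field $D_i$, it carries $X_se_i$ to the vector field on $X$ associated with the degree-zero derivation $X_sD_i\in\Der_k R$; and the computations displayed just before the lemma show that on $U\subset U_i$ this vector field equals $-x_s\sum_{t\ne i}x_td_t$ when $s\ne i$, equals $-\sum_{t\ne i}x_td_t$ when $s=i$, and equals $x_sd_j$ for the classes $\overline{X_sD_j}$ with $j\ne i$ — which is precisely the definition of $\varphi_U$ on $\overline{X_sD_i}$, $\overline{X_iD_i}$, and $\overline{X_sD_j}$. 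Hence the curved triangle commutes, and combining it with the left-hand square gives $\varphi_U\circ f=\rho_{XU}$ on all of $H^0(X,T_X)$.

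I do not expect a genuine obstacle here: the only care required is to match the local trivialisation of $\mathcal{O}_X(1)$ over $U_i$ with the sign conventions so that the abstract Euler-sequence map is literally described by the action of the degree-zero derivations $X_sD_i$ on the affine ring $R_i$, and this bookkeeping has already been carried out in the displayed identities preceding the lemma.
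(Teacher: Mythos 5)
Your proposal is correct and follows essentially the same route as the paper, whose ``proof'' is precisely the displayed computations of $X_sD_i$ and $X_sD_j$ on $U\subset U_i$ together with the naturality of the Euler-sequence identification under restriction, summarized in the commutative diagram immediately preceding the lemma. You merely make explicit the decomposition into the left-hand square and the curved triangle, which the paper leaves implicit.
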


\begin{lem}\label{loc-glob}
Let $\overline{D}\in \bigoplus_{i=0}^{n}SD_i/\left(D_E\right)$ and $\delta \in H^0(X,T_X)$ be the corresponding 
global section of $T_X$. Then $\Proj R^D\cong X/\delta$. 
\end{lem}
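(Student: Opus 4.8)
The plan is to compare the two constructions patch by patch, using the restriction maps $\varphi_{U_i}$ from Lemma~\ref{comm} to translate the homogeneous derivation $D = \sum_i F_i D_i$ (with $F_i \in S$, well-defined modulo $D_E$) into the local vector fields $\delta|_{U_i}$ that define $X/\delta$. First I would unwind the definitions: $X/\delta$ is glued from the affine pieces $\Spec R_i^{\delta_i}$ where $\delta_i := \varphi_{U_i}(\overline D) \in \Der_k R_i$ is the restriction of the global section $\delta$ to $U_i = \Spec R_i$, and $R_i^{\delta_i} = \{ r \in R_i \mid \delta_i(r) = 0 \}$. On the other side, $\Proj R^D$ is built from the standard affine charts $(R^D)_{(X_i^{?})}$ — more precisely, one must identify $D_+(X_i) \cap \Proj R^D$ with $\Spec\big( (R^D)_{(g)} \big)$ for a suitable homogeneous $g \in R^D$ lying over $X_i$ (e.g.\ $g = X_i^p$, which is $D$-constant since $D$ is $p$-closed and homogeneous of the right degree, or an appropriate power making the degree divisible by the relevant index). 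So the crux is the purely algebraic identity
$$ R_i^{\delta_i} \;=\; (R^D)_{(g)}, $$
compatible with the gluing data, where the right side denotes the degree-zero part of the localization of $R^D$ at $g$.

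Next I would establish that identity. The key computation is that for a homogeneous element $h/X_i^m \in R_i = R_{(X_i)}$, one has $\delta_i(h/X_i^m) = 0$ if and only if the corresponding homogeneous element of the localization $R_{(g)}$ is killed by $D$. This follows from the explicit formulas for $\varphi_{U_i}$ recorded before Lemma~\ref{comm}: writing $D = \sum_s F_s D_s$, the operator $\varphi_{U_i}(\overline D)$ acts on $x_j = X_j/X_i$ exactly as $D$ acts on $X_j/X_i$ in the localization $R_{(X_i)}$, because $D_s(X_t/X_i)$ and $D_s(X_t)$ are related by the quotient rule in precisely the way the displayed tables encode. Concretely, $D$ restricted to the $\ZZ$-graded ring $R_{X_i}$ and then to degree zero \emph{is} $\delta_i$ under the identification $R_i = (R_{X_i})_0$; the ambiguity modulo $D_E$ is harmless because $D_E$ is the Euler operator, which annihilates every degree-zero element of $R_{X_i}$. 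Hence $(R^D)_{(X_i)} = (R_{X_i})_0 \cap R^D = (R_{X_i})_0^{\delta_i} = R_i^{\delta_i}$, where the middle equality is the only thing needing the computation. One should also check that $D$ preserves the grading (it is homogeneous of degree $0$ once we regard $F_s D_s$ correctly, since $\deg F_s = 1$ and $D_s$ lowers degree by $1$), so that $R^D$ is a graded subring and taking $\Proj$ makes sense.

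Finally I would verify that these chart-wise isomorphisms glue to a global isomorphism, and that they are compatible with the two finite maps to $X$ (resp.\ $X^{(-1)}$). Compatibility of the gluing is automatic once one notes that both $X/\delta$ and $\Proj R^D$ have $D_+(X_i) \cap D_+(X_j)$-pieces obtained by further localizing at $X_j/X_i$, and the identification above commutes with such localizations (localization is exact and commutes with taking kernels of the $R^D$- or $R_i$-linear... rather, additive operator $\delta$, since $\delta$ is a derivation and $X_j/X_i$ is a unit on the overlap). The morphism $\pi_\delta : X \to X/\delta$ corresponds under our identification to the map induced by the inclusion $R^D \hookrightarrow R$ of graded rings, i.e.\ to the natural $\Proj R \to \Proj R^D$; this is finite because $R$ is module-finite over $R^D$ (each $X_i^p \in R^D$, so $R$ is integral over $R^D$, and both are finitely generated $k$-algebras), and it is the same finite map through which the Frobenius of $X$ factors. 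I expect the main obstacle to be purely bookkeeping: handling the $D_E$-ambiguity cleanly and choosing the right power $g$ of $X_i$ so that degrees work out in $\Proj R^D$ — there are no conceptual difficulties, only the need to keep the graded-localization indices straight.
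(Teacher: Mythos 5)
Your plan is essentially the paper's own argument: both compare the two constructions affine patch by patch, identifying the invariants of a homogeneous localization of $R$ with the corresponding homogeneous localization of $R^D$ (the paper uses $D_+(F)$ for homogeneous $F\in R^D$, replacing $F$ by $X_i^pF$ to land in a chart $U_i$; you use $g=X_i^p$ directly), and both use the restriction maps of Lemma~\ref{comm} — i.e.\ the fact that the Euler derivation $D_E$ kills degree-zero elements, so $\delta$ agrees with the derivation induced by $D$ on each chart. One trivial correction: $X_i^p\in R^D$ because $D(X_i^p)=pX_i^{p-1}D(X_i)=0$ in characteristic $p$, not because $D$ is $p$-closed; otherwise the proposal is fine.
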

\begin{proof}
Let $F\in R$ be a homogeneous polynomial such that $F\in R^D$. Replacing $F$ by $X_i^p F$, we may assume 
that $D_+(F)\subset U_i$. We can easily check that $(R_{(F)})^D=(R^D)_{(F)}$. For an open set 
$D_+(F)\subset \Proj R^D$, we have $D_+(F)\cong \Spec (R^D)_{(F)}\cong \Spec (R_{(F)})^D$. On the other hand, 
$X/\delta$ is defined by glueing $D_+(F)/\delta\cong \Spec (R_{(F)})^{\delta}\cong \Spec (R_{(F)})^D$, since 
$\delta=D$ on $D_+(F)\subset U_i$ by Lemma~\ref{comm}. Therefore we have $\Proj R^D\cong X/\delta$. 
\end{proof}

We define the $p$-th composition $\overline{D}^p$ of $\overline{D}\in \bigoplus_{i=0}^{n}SD_i/\left(D_E\right)$ by 
$\overline{D^p}$. We say that $\delta \in H^0(X,T_X)$ (resp. 
$\overline{D}\in \bigoplus_{i=0}^{n}SD_i/\left(D_E\right)$) is $p$-closed if $\delta^p=\alpha \delta$ (resp. 
$\overline{D}^p=\alpha \overline{D}$) for some $\alpha \in k$. 

\begin{lem}\label{p-closed}
Let $\overline{D}\in \bigoplus_{i=0}^{n}SD_i/\left(D_E\right)$ and $\delta \in H^0(X,T_X)$ be the corresponding global 
section of $T_X$. Supposed that $D\not=0$. If $\delta$ is $p$-closed, then $D$ is also $p$-closed. 
\end{lem}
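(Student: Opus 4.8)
The plan is to transfer $p$-closedness across the $k$-linear isomorphism $f$ of Lemma~\ref{comm}, the one real point being that $f$ is compatible with taking $p$-fold composition powers. First I would record the standing facts. Since $\ch k=p$, the $p$-fold composite of a $k$-derivation of a commutative $k$-algebra is again a $k$-derivation (Jacobson's formula, using $p\mid\binom{p}{i}$ for $0<i<p$); hence $\delta^p$ is a well-defined element of $H^0(X,T_X)$. Moreover, writing $D=\sum_j F_jD_j$ with $F_j\in S$, the derivation $D$ of $R$ preserves each graded piece $R_d$, so $D^p$ does too, and therefore $D^p=\sum_j D^p(X_j)D_j$ with $D^p(X_j)\in R_1=S$; thus $D^p\in\bigoplus_iSD_i$ and $\overline{D}^p=\overline{D^p}$ is a well-defined element of $\bigoplus_iSD_i/(D_E)$.

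The heart of the argument is the claim that $f(\overline{D^p})=\delta^p$ in $H^0(X,T_X)$. Since $T_X$ is a sheaf it suffices to verify this after restricting to each member of an open cover $\{U\subset U_i\}_i$, where Lemma~\ref{comm} is available. On one hand, the restriction map $\rho_{XU}$ of $T_X$ is induced by a ring homomorphism, hence commutes with composition of differential operators, so $\rho_{XU}(\delta^p)=(\rho_{XU}\delta)^p$. On the other hand, inspection of its defining formulas shows that $\varphi_U$ sends $\overline{D}$ to the restriction to $U$ of the derivation induced by $D$ on the subring $R_{(X_i)}=R_i$ of $R_{X_i}$; this subring is stable under $D$ (the extension of $D$ to $R_{X_i}$ is again degree-preserving), and restriction of a derivation to a stable subalgebra commutes with $p$-fold composition, so $\varphi_U(\overline{D^p})=(\varphi_U\overline{D})^p$. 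Combining these with $\rho_{XU}=\varphi_U\circ f$ gives, on each such $U$, the chain $\rho_{XU}\bigl(f(\overline{D^p})\bigr)=\varphi_U(\overline{D^p})=(\varphi_U\overline{D})^p=(\rho_{XU}f(\overline{D}))^p=(\rho_{XU}\delta)^p=\rho_{XU}(\delta^p)$, whence $f(\overline{D^p})=\delta^p$.

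With the claim in hand the lemma is immediate: if $\delta$ is $p$-closed, say $\delta^p=\alpha\delta$ with $\alpha\in k$, then $f(\overline{D^p})=\delta^p=\alpha\delta=\alpha f(\overline{D})=f(\alpha\overline{D})$, and injectivity of $f$ forces $\overline{D}^p=\alpha\overline{D}$, i.e.\ $D$ is $p$-closed. (The hypothesis $D\neq0$ is not actually needed for this implication, since for $D=0$ the conclusion is trivial; it is simply the case of interest, a nonzero $p$-closed $D$ being what produces an $F$-sandwich.) The only step that demands genuine care is the middle paragraph: one must check that $D^p$ really stays inside $\bigoplus_iSD_i$ (a degree count), that $\varphi_U$ is the honest de-homogenization of $D$ on $U_i$ rather than a twist of it, and that de-homogenizing then iterating $p$ times agrees with iterating $p$ times then de-homogenizing. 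None of this is deep, but it is where the proof has content; everything else is the formal propagation of an identity along a linear isomorphism.
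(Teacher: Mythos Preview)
Your argument correctly establishes $\overline{D}^p=\alpha\overline{D}$ for some $\alpha\in k$; this is precisely the opening line of the paper's own proof (``If $\delta$ is $p$-closed, then $\overline{D}$ is also $p$-closed''), and your verification that the isomorphism $f$ intertwines $p$-th powers via the restriction maps $\varphi_U$ and $\rho_{XU}$ is sound. The gap is in your final sentence. The lemma's conclusion is that $D$---the chosen lift in $\bigoplus_i SD_i$, not its class $\overline{D}$---is $p$-closed, meaning $D^p=\alpha' D$ for some scalar $\alpha'$. From $\overline{D}^p=\alpha\overline{D}$ one only obtains $D^p-\alpha D+\beta D_E=0$ for some $\beta\in k$, and it remains to show that $D$ actually satisfies a relation $D^p=\alpha' D$ with no $D_E$-term. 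Your ``i.e.\ $D$ is $p$-closed'' silently identifies $p$-closedness of $\overline{D}$ with $p$-closedness of $D$; these are genuinely different conditions, and the passage from the former to the latter is the substantive content of the lemma, which you have skipped. (This is also why the hypothesis $D\neq 0$ matters: it rules out $\mu_D(t)=t$ in the argument below.)

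The paper closes this gap as follows. From $D^p-\alpha D+\beta D_E=0$ and $D\in\bigoplus_i SD_i$ it obtains, after composing with $D$, the relation $D^{p+1}-\alpha D^2+\beta D=0$, so that $t^{p+1}-\alpha t^2+\beta t$ is divisible by the minimal polynomial $\mu_D(t)$ of $D$ regarded as a $K^D$-linear endomorphism of $K=\Frac R$. Invoking \cite{AA}, Lemma~2.4, which says $\mu_D(t)$ is an additive $p$-polynomial $t^{p^m}+\sum_{i<m}a_it^{p^i}$ with $a_i\in R^D$, the degree bound forces $m=1$ and $\mu_D(t)=t^p+a_0t$; hence $D^p+a_0D=0$, and finally $D\in\bigoplus_i SD_i$ gives $a_0\in k$. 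This minimal-polynomial step, not the compatibility of $f$ with $p$-th powers, is where the work lies, and it is exactly what your proposal omits.
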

\begin{proof}
Let $K=\Frac R$. If $\delta$ is $p$-closed, then $\overline{D}$ is also $p$-closed. Hence 
$\overline{D^p}=\overline{D}^p=\alpha \overline{D}=\overline{\alpha D}$ for some $\alpha\in k$ and 
$D^p-\alpha D+\beta D_E=0$ for some $\beta\in k$. Since $D\in \bigoplus_{i=0}^{n}SD_i$, we have 
$D^{p+1}-\alpha D^2+\beta D=(D^p-\alpha D+\beta D_E)\circ D=0$. This means that 
$t^{p+1}-\alpha t^2+\beta t\in K^D[t]$ is divided by the minimal polynomial $\mu_{D} (t) \in K^D[t]$ of $D$, where we 
consider $D$ as a $K^D$-linear map $K\rightarrow K$. Since 
$\mu_{D} (t)=t^{p^m}+\sum_{i=0}^{m-1} a_i t^{p^i}$ with $a_i\in R^D$ by \cite{AA} Lemma 2.4, we have 
$\mu_{D}(t)=t^p+a_0t$. In particular, $D^p+a_0D=0$. Since $D\in \bigoplus_{i=0}^{n}SD_i$, we see that $a_0\in k$. 
Therefore $D$ is $p$-closed. 
\end{proof}

\section{Globally F-regular F-sandwiches of degree $p$ of $\PP^n$}
We will use the following lemmas in the proof of the main result. 
\begin{lem}\label{cong}
Let $X$ be a smooth variety, $Y$ be a globally F-regular $F$-sandwich of degree $p$ of $X$ with the finite morphism 
$\pi : X \rightarrow Y$, and $L \subset T_X$ be the corresponding $1$-foliation. Then we have
$$\Hom_{\mathcal{O}_Y}(\pi_*\mathcal{O}_X,\mathcal{O}_Y) \cong H^0(Y,\pi_{\ast}(L^{\otimes (p-1)}))= H^0 (X, L^{\otimes (p-1)}).$$
\end{lem}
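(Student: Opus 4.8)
The plan is to make the duality theory of the finite flat morphism $\pi : X \to Y$ explicit and then identify the relative dualizing module of $\pi$ in terms of the $1$-foliation $L$. Since $\pi$ is finite and $Y$ is normal (indeed globally F-regular, hence Cohen--Macaulay) while $X = \Spec$ing locally over $Y$ is smooth, $\pi$ is flat: locally $\pi_*\mathcal{O}_X$ is a finite torsion-free, hence locally free, module over the regular-in-codimension-one ring $\mathcal{O}_Y$, and flatness at all points follows because the fibers over the (codimension-zero and codimension-one) points have constant length $p$. Consequently $\omega_{\pi} := \hom_{\mathcal{O}_Y}(\pi_*\mathcal{O}_X,\mathcal{O}_Y)$ is an invertible $\pi_*\mathcal{O}_X$-module, and Grothendieck duality for the finite morphism $\pi$ gives $\omega_X \cong \omega_\pi \otimes_{\mathcal{O}_X} \pi^*\omega_Y$, equivalently $\omega_\pi \cong \omega_X \otimes \pi^*\omega_Y^{-1}$.

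The key computation is to show $\pi^*\omega_Y \cong \omega_X \otimes L^{\otimes p}$, i.e. that the relative dualizing sheaf is $\omega_\pi \cong L^{\otimes(1-p)}$, so that its $\mathcal{O}_X$-dual $\hom_{\mathcal{O}_X}(\omega_\pi,\mathcal{O}_X) \cong L^{\otimes(p-1)}$. This is a local statement that I would check on an affine chart $U = \Spec A \subset X$ on which $L$ is generated by a single $p$-closed derivation $\partial$ with $\partial^p = \alpha\partial$ for some $\alpha \in A$ (after clearing denominators $\partial$ has no zeros on $U$, so $L|_U \cong A$ with generator $\partial$). Then $B = A^\partial$ and $A$ is free of rank $p$ over $B$ with basis $1, x, \dots, x^{p-1}$ for a suitable $x \in A$ with $\partial x$ a unit; a direct computation with this basis — this is the standard computation for purely inseparable/height-one extensions, cf. the $1$-foliation literature cited (\cite{RS}, \cite{E}, \cite{Hirokado}) — shows that the trace form is degenerate in a way controlled precisely by $\det$ of the Frobenius-type matrix, yielding $\omega_{A/B} \cong \Omega^{top}_{A}\otimes (\Omega^{top}_B)^{-1} \cong (\det L|_U)^{\otimes(1-p)} = (L|_U)^{\otimes(1-p)}$ since $L$ is invertible. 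Globalizing over $X$ and taking $\mathcal{O}_X$-duals gives $\hom_{\mathcal{O}_X}(\omega_\pi,\mathcal{O}_X)\cong L^{\otimes(p-1)}$.

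Granting the identification $\omega_\pi \cong L^{\otimes(1-p)}$, the chain of isomorphisms in the statement is formal. For the first isomorphism, $\hom_{\mathcal{O}_Y}(\pi_*\mathcal{O}_X,\mathcal{O}_Y) = \pi_*\omega_\pi$ by definition; and $\pi_*$ of an $\mathcal{O}_X$-module followed by $\hom$ into $\mathcal{O}_Y$ is adjoint duality, so using that $\omega_\pi$ is an invertible $\pi_*\mathcal{O}_X$-module one gets $\pi_*\omega_\pi \cong \pi_*\hom_{\mathcal{O}_X}(\omega_\pi^{-1},\mathcal{O}_X)\cdot$; more cleanly, since $\pi$ is affine, $H^0(Y,\pi_*(L^{\otimes(p-1)})) = H^0(X,L^{\otimes(p-1)})$ is immediate, which gives the second (equality) in the statement. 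So the real content is the first isomorphism $\hom_{\mathcal{O}_Y}(\pi_*\mathcal{O}_X,\mathcal{O}_Y) \cong \pi_*(L^{\otimes(p-1)})$, i.e. $\omega_\pi \cong L^{\otimes(p-1)}$ as $\pi_*\mathcal{O}_X$-modules — noting that $L^{\otimes(p-1)}$ and $L^{\otimes(1-p)}$ agree as sheaves on the topological space $Y = X$ pushed forward, since $\pi$ is a homeomorphism and one may absorb the sign by the chosen trivialization; the safe formulation is to prove $\pi_*\hom_{\mathcal{O}_Y}(\pi_*\mathcal{O}_X,\mathcal{O}_Y) \cong \hom_{\mathcal{O}_X}(\omega_{X/Y},\mathcal{O}_X) \cong L^{\otimes(p-1)}$ directly from the local basis computation.

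\textbf{Main obstacle.} The delicate point is the local computation of $\omega_{A/B}$ for the rank-$p$ purely inseparable extension $B = A^\partial \subset A$: one must verify that the dualizing module is exactly $L^{\otimes(1-p)}$ (not merely isomorphic up to a line bundle pulled back from $Y$, which duality alone guarantees), and in particular that the generator $\partial$ of $L$ and the resulting identification are compatible with the gluing data so that the local isomorphisms patch to a global one on $X$. This requires writing the trace pairing $A \times A \to B$ in the basis $1,x,\dots,x^{p-1}$, computing its discriminant as a power of $\partial x$, and matching $\partial x$ with the transition functions of $L$; the Kähler-differential formula $\omega_{A/B} = \operatorname{Ann}_A(\text{the kernel of }A\otimes_B A \to A)^\vee$ or the Tate–trace description makes this tractable, but it is the one place where genuine care is needed rather than formal nonsense. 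I also need to confirm $\pi$ is flat everywhere, not just in codimension one; this follows from $Y$ Cohen--Macaulay, $X$ Cohen--Macaulay of the same dimension, and $\pi$ finite with fibers of constant length, via the local criterion for flatness (miracle flatness).
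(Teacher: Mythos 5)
The paper itself disposes of this lemma by citing \cite{HS}, Theorem 3.4, so there is no in-paper argument to compare with; your route (flatness of $\pi$ by miracle flatness, duality for the finite morphism, and identification of the relative dualizing sheaf of $\pi$ in terms of the $1$-foliation) is indeed the standard argument behind that citation. The problem is that your key identification has the wrong sign, and the real work is deferred. The Rudakov--Shafarevich/Ekedahl formula for quotients by an invertible $1$-foliation is $\omega_X\cong\pi^*\omega_Y\otimes L^{\otimes(p-1)}$, so the relative dualizing sheaf is $\omega_{X/Y}\cong L^{\otimes(p-1)}$, not $L^{\otimes(1-p)}$ (your displayed relation $\pi^*\omega_Y\cong\omega_X\otimes L^{\otimes p}$ is moreover inconsistent even with your own claim $\omega_\pi\cong L^{\otimes(1-p)}$). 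Since $\Hom_{\mathcal{O}_Y}(\pi_*\mathcal{O}_X,\mathcal{O}_Y)$ is literally the global sections of $\pi_*\omega_{X/Y}$ --- no further $\mathcal{O}_X$-dualization enters --- your version of the computation would give $H^0(X,L^{\otimes(1-p)})$, which is a different space, and the attempted repair (``$L^{\otimes(p-1)}$ and $L^{\otimes(1-p)}$ agree \dots since $\pi$ is a homeomorphism, absorb the sign by the chosen trivialization'') is not valid: these are mutually dual line bundles and are in general non-isomorphic. A sanity check exposes the sign: take $X=\PP^1$, $L=T_X=\mathcal{O}(2)$, $\pi$ the Frobenius (so $Y\cong\PP^1$); then $\hom_{\mathcal{O}_Y}(\pi_*\mathcal{O}_X,\mathcal{O}_Y)\cong\pi_*\omega_X^{\otimes(1-p)}=\pi_*L^{\otimes(p-1)}=\pi_*\mathcal{O}(2p-2)$, which has many global sections (these are the splitting candidates), whereas $H^0(X,L^{\otimes(1-p)})=H^0(\PP^1,\mathcal{O}(2-2p))=0$ for $p>1$.

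Beyond the sign, the one genuinely substantive step --- the local trace-pairing/basis computation identifying $\omega_{A/B}$ for $B=A^{\partial}\subset A$ and the check that the local identifications glue --- is exactly what you flag as the ``main obstacle'' and do not carry out; that computation is the content of \cite{HS}, Theorem 3.4 (cf.\ \cite{RS}, \cite{E}). Once the sign is corrected the conclusion is immediate and needs no dual: $\Hom_{\mathcal{O}_Y}(\pi_*\mathcal{O}_X,\mathcal{O}_Y)=H^0\bigl(Y,\hom_{\mathcal{O}_Y}(\pi_*\mathcal{O}_X,\mathcal{O}_Y)\bigr)\cong H^0\bigl(Y,\pi_*(L^{\otimes(p-1)})\bigr)=H^0\bigl(X,L^{\otimes(p-1)}\bigr)$, the last equality because $\pi$ is affine. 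So the outline is the right one, but as written the proposal both commits to an incorrect formula at its central step and leaves the step that would prove the correct formula unproved.
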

\begin{proof}
See \cite{HS} Theorem 3.4.
\end{proof}

\begin{lem}\label{key}
Let $\m=(X_0, \ldots ,X_n)$ be the maximal ideal of $R=k[X_0,\ldots ,X_n]$, and $D \in \bigoplus_{i=0}^{n}SD_i$ be a 
$p$-closed derivation. Supposed that $D$ is not nilpotent. Then there exists $D'=\sum_{i=0}^n a_i D_i \in \Der_k R$ with 
$a_i \in \FF_p$ such that $R^{D} \cong R^{D'}$. 
\end{lem}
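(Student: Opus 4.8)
The plan is to normalize the $p$-closed derivation $D = \sum_{i=0}^{n} F_i D_i$ (with $F_i \in S$ homogeneous of degree $1$, taken modulo the Euler relation $D_E$) by a linear change of coordinates on $R$, reducing it to a derivation with $\FF_p$-coefficients. First I would record that since $D$ is $p$-closed and acts on the linear forms $S = kX_0 \oplus \cdots \oplus kX_n$, the restriction $D|_S : S \to S$ is a $k$-linear endomorphism, represented by an $(n+1)\times(n+1)$ matrix $A$ over $k$; and because $D$ is a derivation on the polynomial ring $R$, the fact that $D^p = \alpha D$ on $R$ forces $A^p = \alpha A$ as a matrix identity (one checks that $D^p$ restricted to $S$ is still a derivation-compatible linear map computed by $A^p$, since the $p$-th iterate of a linear vector field is again linear and its matrix is the $p$-th power). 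Thus the minimal polynomial of $A$ divides $t^p - \alpha t$, which is separable, so $A$ is diagonalizable over $k$ (here I would also invoke that $D$ not nilpotent means $\alpha \neq 0$, so after rescaling $D$ we may assume $A^p = A$, i.e. $A$ satisfies $t^p - t = \prod_{c \in \FF_p}(t-c)$).

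Next I would exploit diagonalizability: choose a basis $Y_0, \ldots, Y_n$ of $S$ of eigenvectors of $A$, with eigenvalues $\lambda_0, \ldots, \lambda_n \in \FF_p$ (they lie in $\FF_p$ precisely because they are roots of $t^p - t$). This linear substitution $X_i \mapsto$ (linear combinations giving the $Y_j$) is an automorphism of $R$ carrying $R^D$ isomorphically to $R^{D'}$ where $D'$ is the transported derivation. In the new coordinates $D'(Y_j) = \lambda_j Y_j$, and since a derivation of a polynomial ring is determined by its values on generators, $D' = \sum_{j=0}^{n} \lambda_j Y_j \partial/\partial Y_j$ with all $\lambda_j \in \FF_p$. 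Renaming $Y_j$ back to $X_j$ and $\partial/\partial Y_j$ to $D_j$, this is exactly a derivation $D' = \sum a_i D_i$ of the required shape, except that here the "coefficient" of $D_i$ is $\lambda_i X_i$ rather than a constant — so I would phrase the conclusion as: $D'$ lies in the $\FF_p$-span of the diagonal derivations $X_i D_i$, which is the precise toric-type normal form the main theorem needs. (If the lemma literally wants $a_i \in \FF_p$ with $D' = \sum a_i D_i$, then the intended reading is presumably modulo the Euler relation and with the understanding that one works with the monomial vector fields $X_iD_i$; I would make the statement match that.)

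The last point to nail down is that the linear automorphism of $R$ genuinely descends to an isomorphism of the constant subrings, i.e. $R^D \cong R^{D'}$ as graded $k$-algebras: if $\sigma \in \mathrm{Aut}_k(R)$ is the substitution and $D' = \sigma \circ D \circ \sigma^{-1}$, then $r \in R^{D'} \iff D'(r) = 0 \iff D(\sigma^{-1}(r)) = 0 \iff \sigma^{-1}(r) \in R^D$, so $\sigma$ restricts to the desired isomorphism $R^D \xrightarrow{\sim} R^{D'}$. This is routine. The main obstacle is the first step — verifying cleanly that $p$-closedness of $D$ as a derivation of $R$ translates into $A^p = \alpha A$ for the matrix $A$ of $D|_S$, and hence into diagonalizability over $k$ with eigenvalues in $\FF_p$. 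One has to be slightly careful that the $p$-th iterate $D^p$ of the linear vector field $D = \sum (\sum_j a_{ij}X_j) D_i$ is again linear with matrix $A^p$ (this uses that $D$ preserves the degree-$1$ piece $S$ and acts there by $A$, and that iterating the operator on $S$ is just matrix multiplication); once that is in hand, $D^p = \alpha D$ on $R$ restricts to $A^p = \alpha A$ on $S$, and since $D \neq 0$ (indeed $D$ not nilpotent gives $\alpha \neq 0$) the polynomial $t^p - \alpha t$ is separable and kills $A$, forcing the eigenvalue and diagonalizability conclusions. Everything after that is linear algebra and bookkeeping.
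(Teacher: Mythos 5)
Your proof is correct and follows essentially the same route as the paper: the paper also rescales so that $D^p=D$, observes that the induced linear map (there on $\m/\m^2$, in your case directly on $S$ -- canonically the same thing) is killed by the separable polynomial $t^p-t$, hence diagonalizable with eigenvalues in $\FF_p$, and then changes coordinates to an eigenbasis $Y_0,\ldots,Y_n$ to get $D=\sum a_iY_i\,\der/\der Y_i$. You also correctly identified that the normal form should read $D'=\sum a_iX_iD_i$ (as used in the main theorem), matching the paper's conclusion.
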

\begin{proof}
Since $D \in \bigoplus_{i=0}^{n}SD_i$ and $D$ is not nilpotent, we have $D^p=\alpha D$ for some $\alpha \in k^{\times}$. 
Replacing $D$ by $\alpha^{1/(1-p)} D$, we may assume that $D^p = D$. We define 
$\overline{D} \in \Der_k (\m/\m^2)$ by $\overline{D}(\overline{f})=\overline{D (f)}$. Since $D^p - D =0$, the minimal 
polynomial $\mu_{\overline{D}} (t) \in k[t]$ of $\overline{D}$ divides $t^p-t=t(t-1)(t-2) \cdots (t-(p-1))$. Hence $D$ is 
diagonalizable with eigenvalues $a_0, \ldots , a_n\in \mathbb{F}_p$. Let $Y_0, \ldots , Y_n$ be elements of $S$ such that 
$\overline{Y_0}, \ldots , \overline{Y_n} \in \m/\m^2$ are linearly independent eigenvectors of $\overline{D}$ corresponding 
to eigenvalues $a_0, \ldots ,a_n$, respectively. Then 
$\overline{D(Y_i)}=\overline{D}(\overline{Y_i})=a_i\overline{Y_i}=\overline{a_iY_i}$. Since $D \in \bigoplus_{i=0}^{n}SD_i$ and 
$Y_i\in S$, we have $D (Y_i)\in S$. Thus $D (Y_i)=a_i Y_i$. After a change of coordinates $X_i\mapsto Y_i$, we have 
$D = \sum_{i=0}^n a_i Y_i \der/\der Y_i$. This completes the proof. 
\end{proof}

\begin{lem}\label{toric}
Let $\overline{D}=\overline{\sum_{i=0}^n a_iX_iD_i}\in \bigoplus_{i=0}^n SD_i/\left(D_E\right)$ with $a_i\in\FF_p$. Supposed 
that $\overline{D}\not=0$. Then $\Proj R^D$ is a toric variety. 
\end{lem}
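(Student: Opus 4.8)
The plan is to exhibit $\Proj R^D$ explicitly as an affine toric variety's projectivization, i.e. as a $\Proj$ of a semigroup ring. Since $D = \sum_{i=0}^n a_i X_i D_i$ with $a_i \in \FF_p$, the derivation acts diagonally on monomials: for a monomial $X^{\mathbf{u}} = \prod X_i^{u_i}$ we have $D(X^{\mathbf{u}}) = \big(\sum_i a_i u_i\big) X^{\mathbf{u}}$, where the eigenvalue $\sum_i a_i u_i$ is computed in $\FF_p$. Hence a monomial lies in $R^D$ precisely when $\sum_i a_i u_i \equiv 0 \pmod p$, and $R^D$ is the monomial subalgebra of $R$ spanned by exactly these monomials. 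First I would record that $R^D$ is therefore a (normal, since $R^D = R \cap K^D$ inside the normal ring $R$, and it is a direct summand) affine semigroup ring: writing $\sigma = \{\mathbf{u} \in \N^{n+1} : \sum_i a_i u_i \equiv 0 \bmod p\}$, we have $R^D = k[\sigma]$, and $\sigma$ is a finitely generated submonoid of $\N^{n+1}$ cut out by a single congruence.

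Next I would set up the grading. The ring $R = k[X_0,\dots,X_n]$ is standard graded by total degree, and since $D$ is homogeneous of degree $0$, the subring $R^D$ inherits this grading; $\Proj R^D$ is taken with respect to it. To see $\Proj R^D$ is toric, I would produce a torus acting on it with a dense orbit. The torus $T = (k^\times)^{n+1}$ acts on $R$ by scaling the $X_i$; this action commutes with $D$ (as $D$ is diagonal), so $T$ acts on $R^D$ preserving the grading, and thus acts on $\Proj R^D$. The diagonal $k^\times \subset T$ acts trivially on $\Proj$, so effectively the quotient torus $T/k^\times \cong (k^\times)^n$ acts. Then I would exhibit a dense orbit: the point of $\Proj R^D$ corresponding to the non-vanishing of a suitable $D$-invariant monomial (e.g. a power of $X_0 \cdots X_n$, whose eigenvalue is $(p)\sum a_i \equiv 0$ when raised to the $p$-th power, or already $X_0\cdots X_n$ if $\sum a_i \equiv 0$) has stabilizer exactly the diagonal, so its $T$-orbit is dense. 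Alternatively, and perhaps more cleanly, I would directly identify $R^D$ as the ring of $\mu_p$-invariants of $R$ under the $\mu_p$-action $X_i \mapsto \zeta^{a_i} X_i$ (valid over an algebraically closed field containing $p$-th roots of unity; in characteristic $p$ one must instead phrase this via the $\ZZ/p$-grading on $R$ by $\deg X_i = a_i$ and take the degree-$0$ part — this is literally what $R^D$ is), so $\Proj R^D$ is the quotient $\PP^n /\!\!/ (\ZZ/p)$ of the toric variety $\PP^n$ by a subgroup of its torus, hence toric.

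The cleanest route, which I would write up, avoids roots of unity entirely: $R^D$ is the degree-$0$ part of $R$ under the grading by the group $G = \ZZ \oplus \ZZ/p$ with $\deg X_i = (1, a_i)$ — the $\ZZ$-component recovers the standard grading and the $\ZZ/p$-component is the $D$-eigenvalue. The monoid of degrees appearing, inside $G$, is generated by the classes $(1, a_i)$; the degree-$(d,0)$ pieces assemble into $R^D$. Because $R$ is the semigroup ring of $\N^{n+1}$, the $G$-grading corresponds to a homomorphism $\N^{n+1} \to G$, and $R^D = \bigoplus_{d} R_{(d,0)}$ is again a semigroup ring on the sublattice-monoid $\sigma$ above; its $\Proj$ with respect to the first grading is then manifestly a toric variety, being $\Proj$ of a positively-graded affine semigroup ring with torus $\Spec k[\ZZ\sigma / \ZZ(\text{degree generator})]$ acting. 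I would invoke the standard fact that $\Proj$ of a standard-ish graded affine semigroup ring over $k$ is a (projective) toric variety, citing a reference such as Cox--Little--Schenck or Bruns--Herzog. The main obstacle is cosmetic rather than mathematical: one must be careful that in characteristic $p$ the "$\mu_p$-quotient" picture cannot be phrased as a literal finite group action on points (the group scheme $\mu_p$ is infinitesimal), so the argument has to be run through gradings / semigroup rings rather than through honest quotients by finite groups. Once that is handled, everything reduces to recognizing a semigroup ring, and there is no hard computation. A minor point to check is that $\ZZ\sigma$ has finite index in $\ZZ^{n+1}$ (so the torus has the expected dimension $n$) — this holds because the congruence $\sum a_i u_i \equiv 0$ defines a finite-index subgroup, using $\overline D \neq 0$ so that not all $a_i$ are zero modulo $p$.
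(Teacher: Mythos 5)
Your proposal is correct, and it takes a genuinely different route from the paper's proof. You work directly with the graded ring $R^D$: since $D=\sum_i a_iX_iD_i$ acts diagonally on monomials with eigenvalue $\sum_i a_iu_i$ reduced mod $p$, the invariant ring is exactly the span of the monomials satisfying $\sum_i a_iu_i\equiv 0 \pmod p$, hence a normal, positively graded affine semigroup ring, and you recognize its $\Proj$ as toric by exhibiting the dense open torus (the degree-zero invariant Laurent monomials, $\Spec k[L_0]$ where $L_0$ is the rank-$n$ congruence sublattice) with its action; the characteristic-$p$ fact that $(t_i/t_j)^p=1$ forces $t_i=t_j$ is what makes the effective torus exactly $n$-dimensional on $k$-points, as you implicitly use. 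The paper instead never analyzes $R^D$ directly in this lemma: it normalizes $\overline{D}$ to $\overline{X_1D_1+\sum_{i\geq 2}a_iX_iD_i}$ using the Euler relation, restricts $\delta$ to the affine charts $U_i$, identifies each chart invariant ring $(R_i)^{\delta}=R_i\cap K(\PP^n)^{\delta}$ with the semigroup ring of the dual cone of $\sigma_i$ intersected with the index-$p$ sublattice $M'=\{s\in M : s_1+\sum_{i\geq 2} a_is_i\equiv 0 \bmod p\}$, concludes that $\PP^n/\delta$ is the toric variety given by the fan of $\PP^n$ in the overlattice $N'$ of $N$, and only then transfers back to $\Proj R^D$ via Lemma~\ref{loc-glob}. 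Your argument is more self-contained for this particular lemma (it needs neither Lemma~\ref{loc-glob} nor the chart formulas behind Lemma~\ref{comm}) and stays entirely inside semigroup-ring language, correctly handling the $\mu_p$ subtlety via gradings; the paper's computation buys a sharper conclusion, namely an explicit identification of \emph{which} toric variety arises (same fan, refined lattice), which makes the quotient structure of $Y$ transparent. Two small points to tighten in a write-up: the hypothesis $\overline{D}\neq 0$ says the $a_i$ are not all equal, not merely not all zero mod $p$ (harmless here, since the finite-index claim holds in any case); and for density of the torus orbit you should record that $\dim \Proj R^D=n$, e.g. because $R^p\subset R^D$ makes $R$ finite over $R^D$, together with the normality of $R^D$, which you do justify via $R^D=R\cap K^D$.
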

\begin{proof}
We refer to \cite{CLS} for the general theory of toric varieties. 

Let $\delta\in H^0(\PP^n,T_{\PP^n})$ be the corresponding global section of $T_{\PP^n}$. We have 
$\overline{D}=\overline{\sum_{i=1}^n(a_i-a_0)X_iD_i}$ and $a_i-a_0\not=0$ for some $i$. Replacing $a_1-a_0$ by $a_i-a_0$, 
and multiplying $\overline{D}$ by $(a_1-a_0)^{-1}$, we may assume that $\overline{D}=\overline{X_1D_1+\sum_{i=2}^na_iX_iD_i}$. 
Then we have $\delta|_{U_0}=\overline{D}|_{U_0}=x_1d_1+\sum_{i=2}^n a_ix_i d_i$, which is a description of $\delta$ as a rational 
vector field. 

Let $N = \ZZ^n$ be a lattice, $M$ be the dual lattice of $N$, and $\Sigma$ be the fan in $N\otimes \R$ corresponding to the 
projective space $\PP^n$. Let $N'=N+\ZZ\,\dfrac{1}{p}(1,a_2, \ldots , a_n)$ be an overlattice of $N$, and $M'$ be the dual 
lattice of $N'$. We have 
\begin{eqnarray*}
M' &=& (N')\spcheck \\
 &=& \left( \ZZ\dfrac{1}{p}(1,a_2,\ldots ,a_n) \oplus \ZZ\dfrac{1}{p}(1,a_2-p,\ldots ,a_n)\oplus \cdots \oplus \ZZ\dfrac{1}{p}(1,a_2,\ldots ,a_n-p) \right)\spcheck \\
 &=& \ZZ(p-a_2-\cdots-a_n,1,\ldots,1) \\
 & & \oplus \ZZ (a_2, -1, 0, \ldots ,0) \oplus \cdots \oplus \ZZ (a_n, 0, \ldots , 0, -1) \\
 &=& \ZZ (p,0,\ldots,0) \oplus \ZZ(a_2, -1, 0, \ldots ,0) \oplus \cdots \oplus \ZZ(a_n, 0, \ldots , 0, -1) \\
 &=& \left\{ (s_1, \ldots ,s_n) \in M \left| s_1 + \sum_{i=2}^{n}a_is_i \equiv 0 \mod p \right. \right\} \subset M. 
\end{eqnarray*}
Let $\sigma_i$ be the cone corresponding to $U_i=D_+(X_i)\subset \PP^n$. Since 
$(R_i)^{\delta}=R_i\cap K(\PP^n)^{\delta}=k[(\sigma_i)\spcheck \cap M']$, we see that $\PP^n/\delta$ is the toric variety whose 
corresponding fan is $\Sigma$ in $N'\otimes \R$. Since $\PP^n/\delta\cong \Proj R^D$ by Lemma~\ref{loc-glob}, $\Proj R^D$ 
is the toric variety. 
\end{proof}

\begin{thm}
Globally F-regular $F$-sandwiches of degree $p$ of $\PP^n$ are toric varieties. 
\end{thm}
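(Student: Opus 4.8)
The strategy is to reduce an arbitrary globally F-regular $F$-sandwich of degree $p$ of $\PP^n$ to the toric situation already handled in Lemma~\ref{toric}, using the correspondences of Section~1 together with the concrete description of $T_{\PP^n}$ from Section~2. Starting from such a sandwich $Y$ with finite morphism $\pi : X = \PP^n \to Y$, pass to the associated $p$-closed rational vector field on $\PP^n$, and then to the global section $\delta \in H^0(\PP^n, T_{\PP^n})$ it determines up to the equivalence $\sim$. Via the isomorphism $f$ of Section~2, this corresponds to some $\overline{D} \in \bigoplus_{i=0}^n S D_i/(D_E)$, which we may lift to $D \in \bigoplus_{i=0}^n S D_i \subset \Der_k R$. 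By Lemma~\ref{p-closed}, $D$ is $p$-closed, and by Lemma~\ref{loc-glob} we have $\Proj R^D \cong X/\delta \cong Y$, so it suffices to show $\Proj R^D$ is toric.

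The heart of the argument is to show that $D$ (after rescaling within its equivalence class) may be taken to have the special diagonal form $\sum_{i=0}^n a_i X_i D_i$ with $a_i \in \FF_p$, at which point Lemma~\ref{toric} finishes the proof. There are two cases. First I would handle the case where $D$ is \emph{not} nilpotent: then Lemma~\ref{key} directly produces, after a linear change of coordinates on $S = k\langle X_0,\dots,X_n\rangle$ (which does not change $\PP^n$ or the toric conclusion), a derivation $D' = \sum a_i Y_i \partial/\partial Y_i$ with $a_i \in \FF_p$ such that $R^D \cong R^{D'}$; we must check $\overline{D'} \ne 0$ in $\bigoplus S D_i / (D_E)$, which amounts to the $a_i$ not all being equal, and then Lemma~\ref{toric} applies to give $\Proj R^{D'}$ toric, hence $Y$ toric.

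The remaining case --- and the step I expect to be the main obstacle --- is when $D$ \emph{is} nilpotent (equivalently $\delta^p = 0$, the $p$-closure constant being $0$). Here Lemma~\ref{key} does not apply, and one must invoke global F-regularity of $Y$, which up to now has been unused. The idea is to feed the splitting criterion into Lemma~\ref{cong}: since $Y$ is globally F-regular, $\Hom_{\mathcal{O}_Y}(\pi_*\mathcal{O}_X, \mathcal{O}_Y) \cong H^0(\PP^n, L^{\otimes(p-1)})$ must be nonzero, where $L = \mathcal{O}_{\PP^n}(\Div(\delta))$ is the invertible $1$-foliation. One computes the divisor $\Div(\delta)$ from the local expressions of $D$ in Section~2: writing $D = \sum X_s D_i$-type terms with polynomial coefficients of degree $1$, the restriction $\varphi_{U_0}(\overline{D})$ is a vector field on $U_0 = \Spec k[x_1,\dots,x_n]$ with linear coefficients, so $\Div(\delta)$ has a controlled form and $L \cong \mathcal{O}_{\PP^n}(m)$ for an explicit $m$ depending on the shape of $D$. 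The nonvanishing of $H^0(\PP^n, \mathcal{O}_{\PP^n}(m(p-1)))$ forces $m \ge 0$, and combined with the constraint that $D$ be nilpotent and $p$-closed this should be restrictive enough to conclude: either it forces a contradiction (no nilpotent $D$ of the relevant degree gives a globally F-regular quotient), or it forces $D$ into a form conjugate to a diagonal $\FF_p$-derivation after all. Pinning down this divisor computation and the resulting numerical inequality — and verifying it genuinely excludes the nilpotent case or reduces it to the toric one — is the technical crux; the rest is assembly of the lemmas already in place.
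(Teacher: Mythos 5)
There is a genuine gap, and it sits at the very first step of your plan rather than where you expect it. You pass from the $p$-closed rational vector field attached to $Y$ to ``the global section $\delta\in H^0(\PP^n,T_{\PP^n})$ it determines up to $\sim$'', and you state explicitly that global F-regularity is unused until the nilpotent case. But a $p$-closed rational vector field on $\PP^n$ need not be equivalent to any global vector field: its invertible $1$-foliation is $L\cong\mathcal{O}_{\PP^n}(\Div(\delta))\cong\mathcal{O}_{\PP^n}(m)$ with $m$ possibly negative, and when $m<0$ the $\sim$-class of $\delta$ contains no global section of $T_{\PP^n}$, so the isomorphism $f$ of Section~2, Lemma~\ref{p-closed} and Lemma~\ref{loc-glob} are all unavailable. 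This is exactly where the paper uses global F-regularity, and it is its only use: by Lemma~\ref{split_vs_F-reg} the map $\mathcal{O}_Y\to\pi_*\mathcal{O}_{\PP^n}$ splits, so $\Hom_{\mathcal{O}_Y}(\pi_*\mathcal{O}_{\PP^n},\mathcal{O}_Y)\neq 0$; by Lemma~\ref{cong} this gives $H^0(\PP^n,L^{\otimes(p-1)})\neq 0$, hence $m\ge 0$, hence $H^0(\PP^n,L)\neq 0$ and $\delta$ may be replaced, within its equivalence class, by a global section of $T_{\PP^n}$. After that the paper proceeds exactly as your first branch does (Lemma~\ref{p-closed}, Lemma~\ref{key}, Lemma~\ref{loc-glob}, Lemma~\ref{toric}). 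So your non-nilpotent branch is unsupported at its starting point, and the numerical inequality $m\ge 0$ you hope to exploit later is the content of this initial step, not of the nilpotent case.

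Concerning the nilpotent case: you are right that Lemma~\ref{key} excludes it (the paper's proof simply invokes Lemma~\ref{key} at this point, leaving the non-nilpotency hypothesis implicit), but your proposal does not actually close it, and the mechanism you sketch cannot. Once $\delta$ is a global vector field its local expressions are regular, so $\Div(\delta)$ is automatically effective and $L\cong\mathcal{O}_{\PP^n}(m)$ with $m\ge 0$; the nonvanishing of $H^0(\mathcal{O}_{\PP^n}(m(p-1)))$ then carries no further information and yields neither a contradiction nor a diagonal form. For instance $D=X_1D_0$ is nonzero in $\bigoplus_i SD_i/(D_E)$, $p$-closed and nilpotent, with $L\cong\mathcal{O}_{\PP^n}(1)$; here one is saved because the same foliation is also generated by the semisimple element $\overline{X_0D_0}$ (indeed $X_1D_0|_{U_0}=x_1\cdot\varphi_{U_0}(\overline{X_0D_0})$), so the quotient is still toric. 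But for $D=X_1D_0+X_2D_1$ on $\PP^2$ with $p\ge 3$ one gets $L\cong\mathcal{O}_{\PP^2}$, so every global generator of that foliation is a scalar multiple of this nilpotent $D$; your inequality does not exclude it, and ruling out such quotients (as the theorem demands) requires strictly more than $H^0(L^{\otimes(p-1)})\neq 0$ --- for example the precise splitting criterion behind Lemma~\ref{cong} in \cite{HS}, which identifies which section corresponds to a splitting sending $1\mapsto 1$, or an argument that the foliation of a globally F-regular sandwich always admits a non-nilpotent linear generator. As written, ``should be restrictive enough to conclude'' is a hope, not a proof, and this case remains open in your proposal.
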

\begin{proof}
Let $Y$ be a globally F-regular $F$-sandwich of degree $p$ of $\PP^n$ with the finite morphism $\pi : \PP^n \rightarrow Y$ 
through which the Frobenius morphism of $\PP^n$ factors, and let $L \subset T_{\PP^n}$ (resp. $\delta \in \Der_k\,K(\PP^n)$) 
be the corresponding $1$-foliation (resp. the $p$-closed rational vector field). Since the associated ring homomorphism 
$\mathcal{O}_Y \to \pi_*\mathcal{O}_{\PP^n}$ splits by Lemma~\ref{split_vs_F-reg}, there is a nonzero $\mathcal{O}_Y$-module homomorphism 
${\pi}_{\ast}\mathcal{O}_{\PP^n} \rightarrow  \mathcal{O}_Y$. By Lemma~\ref{cong}, $L^{\otimes (p-1)}$ has a nonzero global section, 
and so dose $L$. Let $\delta =\alpha \sum f_i \der/\der s_i$ be a local expression of $\delta$, where $s_i$ are local coordinates, 
$f_i$ are regular functions without common factors, and $\alpha \in K(X)$. Multiplying $\delta$ by a suitable rational function, 
we may assume that $\alpha$ are regular functions, since $\mathcal{O}_X(\mathrm{div}(\delta))\cong L$. Then $\delta$ is a global section 
of $T_X$. Let $\overline{D}$ be the corresponding element of $\bigoplus_{i=0}^n SD_i/\left(D_E\right)$. Since $\delta$ is $p$-closed, 
$D$ is also $p$-closed by Lemma~\ref{p-closed}. Then there exists $D'=\sum a_i X_i D_i\in \Der_k R$ with $a_i \in \FF_p$ such that 
$R^D\cong R^{D'}$ by Lemma~\ref{key}. We have $Y\cong \PP^n/\delta\cong \Proj R^D\cong \Proj R^{D'}$ by 
Lemma~\ref{loc-glob} and Lemma~\ref{key}. Therefore we see that the $F$-sandwich $Y$ is a toric variety by Lemma~\ref{toric}. 
\end{proof}


\begin{thebibliography}{10}

\bibitem{AA}
Annetta~G. Aramova and Luchezar~L. Avramov, \emph{Singularities of quotients by
  vector fields in characteristic {$p$}}, Math. Ann. \textbf{273} (1986),
  no.~4, 629--645. \MR{826462}

\bibitem{CLS}
David~A. Cox, John~B. Little, and Henry~K. Schenck, \emph{Toric varieties},
  Graduate Studies in Mathematics, vol. 124, American Mathematical Society,
  Providence, RI, 2011. \MR{2810322}

\bibitem{E}
Torsten Ekedahl, \emph{Canonical models of surfaces of general type in positive
  characteristic}, Inst. Hautes \'Etudes Sci. Publ. Math. (1988), no.~67,
  97--144. \MR{972344}

\bibitem{HS}
Nobuo Hara and Tadakazu Sawada, \emph{Splitting of {F}robenius sandwiches},
  Higher dimensional algebraic geometry, RIMS K\^oky\^uroku Bessatsu, B24, Res.
  Inst. Math. Sci. (RIMS), Kyoto, 2011, pp.~121--141. \MR{2809652}

\bibitem{H}
Robin Hartshorne, \emph{Algebraic geometry}, Springer-Verlag, New
  York-Heidelberg, 1977, Graduate Texts in Mathematics, No. 52. \MR{0463157}

\bibitem{Hirokado}
Masayuki Hirokado, \emph{Zariski surfaces as quotients of {H}irzebruch surfaces
  by 1-foliations}, Yokohama Math. J. \textbf{47} (2000), no.~2, 103--120.
  \MR{1763776}

\bibitem{RS}
A.~N. Rudakov and I.~R. {\v{S}}afarevi{\v{c}}, \emph{Inseparable morphisms of
  algebraic surfaces}, Izv. Akad. Nauk SSSR Ser. Mat. \textbf{40} (1976),
  no.~6, 1269--1307, 1439. \MR{0460344}

\bibitem{S}
Tadakazu. Sawada, \emph{Classification of globally f-regular {F}robenius
  sandwiches of {H}irzebruch surfaces}, preprint.

\bibitem{SS}
Karl Schwede and Karen~E. Smith, \emph{Globally {$F$}-regular and log {F}ano
  varieties}, Adv. Math. \textbf{224} (2010), no.~3, 863--894. \MR{2628797}

\bibitem{Smith}
Karen~E. Smith, \emph{Globally {F}-regular varieties: applications to vanishing
  theorems for quotients of {F}ano varieties}, Michigan Math. J. \textbf{48}
  (2000), 553--572, Dedicated to William Fulton on the occasion of his 60th
  birthday. \MR{1786505}

\end{thebibliography}

\end{document}